\documentclass[12pt,letterpaper]{amsart}
\usepackage{euler, epic,eepic,latexsym, amssymb, amscd, amsfonts, xypic, url, color, epsfig}
\input xy
\xyoption{all}

 
 \newlength{\baseunit}               
 \newcount{\numlines}                
 \setlength{\baseunit}{0.05ex}
  
\setlength{\oddsidemargin}{0cm} \setlength{\evensidemargin}{0cm}
\setlength{\marginparwidth}{0in}
\setlength{\marginparsep}{0in}
\setlength{\marginparpush}{0in}
\setlength{\topmargin}{0in}
\setlength{\headheight}{0pt}
\setlength{\headsep}{0pt}
\setlength{\footskip}{.3in}
\setlength{\textheight}{9.2in}
\setlength{\textwidth}{6.5in}
\setlength{\parskip}{4pt}

\alph{subsection}

\theoremstyle{theorem}
\newtheorem*{tmnl}{Main Theorem}
\newtheorem{tm}{Theorem}

\newtheorem{lm}[tm]{Lemma}

\theoremstyle{definition}
\newtheorem{df}[tm]{Definition}
\newtheorem*{assume}{Assumption}

\theoremstyle{remark}
\newtheorem{rmk}[tm]{Remark}


\newcommand{\bbF}{\mathbf{F}}

\newcommand{\bbP}{\mathbf{P}}
\newcommand{\bbQ}{\mathbf{Q}}

\newcommand{\bbZ}{\mathbf{Z}}


\newcommand{\calL}{{ \mathcal L}}

\newcommand{\calN}{{ \mathcal N}}
\newcommand{\calO}{{ \mathcal O}}

\newcommand{\Spec}{ {\operatorname{Spec}}}

\begin{document}
\pagestyle{plain}
\title{An explicit semi-factorial compactification of the N\'{e}ron model}

\address[Current]{Dept.~of Mathematics, University of South Carolina, Columbia~SC}
\address[Former]{Leibniz Universit\"{a}t Hannover, Institut f\"{u}r algebraische Geometrie, Welfengarten 1, 30060 Hannover, Germany}
\email{kassj@math.sc.edu}

\subjclass[2010]{Primary 14H40; Secondary 14D20, 14C22. }

\author{Jesse Leo Kass}

\begin{abstract}

C.~P\'{e}pin recently constructed a semi-factorial compactification of the  N\'{e}ron model of an abelian variety using the flattening technique of Raynaud--Gruson.  Here we prove that an explicit semi-factorial compactification is a certain moduli space of sheaves --- the family of compactified jacobians.

\vskip 0.5\baselineskip

\noindent{\bf R\'esum\'e} \vskip 0.5\baselineskip \noindent
{\bf Une compactification semi-factorielle explicite du mod\`{e}le de N\'{e}ron. }  C.~P\'{e}pin a  construit r\'{e}cemment  une compactification semi-factorielle du mod\`{e}le de N\'{e}ron d'une vari\'{e}t\'{e} ab\'{e}lienne en utilisant les techniques de platification de Raynaud--Gruson. Ici, nous montrons qu'une compactification semi-factoriel explicite d'un certain espace de modules de faisceaux --- la famille de jacobiennes compacifi\'{e}es.

\end{abstract}

\maketitle

{\parskip=12pt 

We prove that the family of compactified jacobians is a semi-factorial compactification of the N\'{e}ron model of the jacobian.  Semi-factoriality is a weakening of factoriality, the condition that the local rings are unique factorization domains.  In \cite{pepin13}, P\'{e}pin introduced the condition and proved that the N\'{e}ron model of an abelian variety $A_{K}$  over the field of fractions $K$ of a discrete valuation ring $R$ admits a semi-factorial compactification.

P\'{e}pin constructed the compactification using the flattening technique of Raynaud--Gruson \cite{raynaud71}.  We give an alternative construction when $A_{K}=J_{K}$ is a jacobian satisfying suitable hypotheses.  We prove that an explicit semi-factorial compactification is given by a compactification of $J_{K}$ as a moduli space --- by the family of compactified jacobians.

What is the compactified jacobian?  Suppose $A_{K} = J_{K}$ is the jacobian of the smooth curve $X_{K}$.  The curve $X_{K}$ extends to a regular model $X/ S$ over $S = \Spec(R)$.  The jacobian $J_{K}$ is the moduli space of degree $0$ line bundles on $X_{K}$, and we can try to extend it to a family $\overline{J}/S$ by adding over the point $0 \in S$ a moduli space of sheaves on $X_{0}$.  When $X_0$ is geometrically integral, we can extend $J_{K}$ by adding the moduli space of degree $0$ rank $1$, torsion-free sheaves on $X_0$, and this extended family is the family of compactified jacobians.   

The line bundle locus $J/S$ in a family of compactified jacobians $\overline{J}/S$ is canonically isomorphic to the N\'{e}ron model of $J_{K}$ by (a special case of)  \cite[Theorem~3.9]{kass13}, a result that extends earlier work on the topic  \cite{oda79, busonero08, caporaso08b, caporaso08a,  caporaso12, melo12a}.  Compactified jacobians are proper by construction, so $\overline{J}/S$ is a compactification of the N\'{e}ron model.  When the Picard rank of $J_K$ is $1$,  $\overline{J}/S$  has the desirable properties studied by P\'{e}pin:

\begin{tmnl} \label{Theorem: MainTheorem}
	The Altman--D'Souza--Kleiman family of compactified jacobians $\overline{J}/S$ is a semi-factorial model of the N\'{e}ron model provided the Picard rank of $J_K$ is $1$.
\end{tmnl}

The Main Theorem includes the explicit hypothesis that $J_{K}$ has Picard rank $1$ and the implicit hypothesis that the special fiber $X_0$ is geometrically integral.  How are these hypotheses used?  When the hypothesis that $X_0$ is geometrically integral fails, the Altman--D'Souza--Kleiman family $\overline{J}/S$ is not defined because the moduli space of degree $0$ rank $1$, torsion-free sheaves on $X_0$ is badly behaved.  A well-behaved space can be recovered by imposing e.g.~a stability condition, but the proof we give here does not immediately apply to these more general spaces.  In proving the Main Theorem, we use the property that translation $\tau_{a_K} \colon J_K \to J_K$ by a point $a_K \in J_{K}(K)$ extends to an automorphism $\overline{J} \to \overline{J}$.  It is not known if  $\overline{J}$ has this extension property when $X_0$ is reducible; the issue is that, when $X_0$ is reducible, the tensor product of two slope semi-stable line bundles can fail to be semi-stable.

The hypothesis that $J_K$ has Picard rank $1$ is used to assert that the N\'{e}ron--Severi group $\operatorname{NS}(J_{\overline{K}})$ is generated by classes that extend to $\overline{J}$.  Under the rank $1$ hypothesis, $\operatorname{NS}(J_{\overline{K}})$ is generated by the class of the theta divisor, and Esteves and Soucaris have (independently) shown that this divisor extends.  In general, when $\operatorname{NS}(J_{\overline{K}})$ is generated by classes that extend, our proof shows that $\overline{J}$ is semi-factorial, and it would be desirable to have more general results describing when classes in $\operatorname{NS}(J_{\overline{K}})$ extend.

\subsection{Preliminaries}
Here we collect results from the literature.  Fix a discrete valuation ring (or dvr for short) $R$ with field of fractions $K$ and residue field $k(0)$.  Set $S=\Spec(R)$ and $0=\Spec(k(0))$.  We fix a smooth \textbf{curve} $X_K/\Spec(K)$ (i.e.~a $K$-scheme of pure dimension 1 that is proper, smooth,  and geometrically connected over $K$) that we assume has genus $g \ge 1$ and study the associated  \textbf{jacobian} $J_K/\Spec(K)$.  The jacobian is a g-dimensional abelian variety that represents the \'{e}tale sheaf parameterizing degree $0$ line bundles on $X_K$, and it extends to the N\'{e}ron model $J/S$, a certain (possibly nonproper) $S$-scheme.  We omit the definition, but one consequence, which we will use, is that the restriction map $J(S) \to J_{K}(K)$ is surjective, i.e.~the weak N\'{e}ron Mapping Property holds.

To study compactifications of $J_{K}$, we make the following definitions.
\begin{df}
	A $S$-scheme $V/S$ is \textbf{semi-factorial} if the restriction map
	\begin{equation}
		\operatorname{Pic}(V) \to \operatorname{Pic}(V_K)
	\end{equation}
	on Picard groups is surjective. 
	
	If $V/S$ is separated and of finite type over $S$, then an  \textbf{$S$-compactification} of $V/S$ is a proper $S$-scheme $\overline{V}/S$ and a $S$-immersion $V \to \overline{V}$ with dense image.  An $S$-compactification  is a \textbf{semi-factorial model} if $\overline{V}/S$ is flat and projective over $S$, normal, and semi-factorial.  A semi-factorial model is a \textbf{regular model} if $\overline{V}$ is a regular scheme.
\end{df}
Corollaire~6.4 of \cite{pepin13} states that the N\'{e}ron model $J/S$ admits a semi-factorial model.  In fact, the Corollaire states that the semi-factorial model can be chosen to have certain desirable base-change properties, which we discuss in Remark~\ref{Remark: Extending}.

The curve $X_K$ admits a regular model $X/S$ because resolution of singularities holds for arithmetic surfaces \cite{lipman}. (Lipman's result is stated for $R$ excellent, but the argument on \cite[page~87]{deligne69} shows  that this hypothesis can be removed.)  For the remainder of this paper, we fix a regular model $X$ satisfying

\begin{assume}
 	$X/S$ is a regular model of $X_K$ with geometrically integral special fiber.
\end{assume}

With this assumption, the Altman--D'Souza--Kleiman \textbf{family of compactified jacobians} $\overline{J}/S$ associated to $X/S$ is defined.  The family of compactified jacobians is an $S$-scheme $\overline{J}/S$ that is projective over $S$ and represents the \'{e}tale sheaf parameterizing families of degree $0$ rank $1$, torsion-free sheaves on $X/S$ \cite[(8.10)~Theorem]{altman80}.  (Under more restrictive hypotheses, this is \cite[Theorem~II.4.1]{dsouza79}.)  The line bundle locus in $\overline{J}$  is  an open subscheme $J$ that is the N\'{e}ron model of $J_{K}$ \cite[Theorem~3.9]{kass13}.

We now recall the definition of the N\'{e}ron-Severi group and the Picard scheme of  $J_K$.  The \textbf{Picard scheme} $\operatorname{Pic}(J_K/K)/\Spec(K)$ is a $K$-group scheme that is locally of finite type over $K$ and represents the \'{e}tale sheaf parameterizing line bundles on $J_K$.  The line bundles that are algebraically equivalent to zero are parameterized by the identity component $\operatorname{Pic}^{0}(J_K/K)$ of the Picard scheme, which is an open and closed $K$-subgroup scheme that is of finite type over $K$.  

Algebraic equivalence classes of line bundles on $J_K$ form the  \textbf{N\'{e}ron--Severi group} which is defined as
$$
	\operatorname{NS}(J_{\overline{K}}) := \frac{\operatorname{Pic}(J_{K}/K)(\overline{K})}{\operatorname{Pic}^{0}(J_{K}/K)(\overline{K})}
$$
for $\overline{K}$ a fixed algebraic closure of $K$.  This group is  finitely generated, hence has a well-defined rank called the \textbf{Picard rank}. 

The Picard rank of $J_K$ is at least $1$.  Indeed, $J_{\overline{K}}$ admits a special type of divisor: the classical theta divisor.  If $\calN_{\overline{K}}$ is a line bundle of degree $g-1$ on $X_{\overline{K}}$, then 
$$
	\Theta_{\overline{K}} := \{ [\calL_{\overline{K}}] \colon h^{0}(X_{\overline{K}}, \calL_{\overline{K}} \otimes \calN_{\overline{K}}) \ne 0 \} \subset J_{\overline{K}} 
$$
is an ample divisor that defines a principal polarization.  That is, the homomorphism
\begin{gather}
	\phi \colon J_{\overline{K}} \to \operatorname{Pic}^{0}(J_{\overline{K}}/\overline{K})  \text{ defined by } \label{Eqn: Polarization} \\
	\phi(a) = \calO_{J_{\overline{K}}}(\tau_{a}^{*}(\Theta_{\overline{K}})-\Theta_{\overline{K}}) \notag
\end{gather}
is an isomorphism.  Here $\tau_{a}$ is the translation-by-$a$ map.

The divisor $\Theta_{\overline{K}}$ depends on the choice of $\calN_{\overline{K}}$, but its image in the N\'{e}ron--Severi group is independent of the choice, and we denote this common image by $\theta$.  Because $\Theta_{\overline{K}}$ is a principal polarization, $\theta$ is nonzero, and furthermore:
\begin{lm} \label{Lemma: ThetaGenerates}
	The class $\theta$ freely generates $\operatorname{NS}(J_{\overline{K}})$ when the Picard rank of  $J_{K}$ is $1$.
\end{lm}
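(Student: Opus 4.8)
The plan is to deduce the statement from two standard facts about abelian varieties: the N\'{e}ron--Severi group of an abelian variety is torsion-free, and a principal polarization cannot be a proper multiple of another class. The first fact reduces the lemma to a divisibility assertion about $\theta$, and the second is exactly where the theta divisor enters.

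First I would record that $\operatorname{NS}(J_{\overline{K}})$, as defined above, is the usual N\'{e}ron--Severi group of the abelian variety $J_{\overline{K}}$ over $\overline{K}$: the Picard scheme commutes with the extension $K \subset \overline{K}$, and $J_{\overline{K}}$ has the origin as a rational point, so $\operatorname{Pic}(J_{K}/K)(\overline{K}) = \operatorname{Pic}(J_{\overline{K}})$, and similarly for $\operatorname{Pic}^{0}$. The assignment $\calL \mapsto \phi_{\calL}$, $\phi_{\calL}(a) = \calO_{J_{\overline{K}}}(\tau_{a}^{*}\calL - \calL)$, is additive in $\calL$ and factors through $\operatorname{NS}(J_{\overline{K}})$, embedding it into $\operatorname{Hom}(J_{\overline{K}}, \operatorname{Pic}^{0}(J_{\overline{K}}/\overline{K}))$; the latter group is torsion-free because multiplication by $n$ on an abelian variety is an isogeny, so it kills no nonzero homomorphism. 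Hence, under the Picard-rank-$1$ hypothesis, $\operatorname{NS}(J_{\overline{K}})$ is free of rank $1$. Fix a generator $\theta_{0}$; since $\theta \neq 0$ we may arrange $\theta = m\theta_{0}$ with $m \ge 1$ (this is automatic once the sign of $\theta_{0}$ is fixed, as $\Theta_{\overline{K}}$ is ample, though ampleness is not otherwise needed). It then remains to show $m = 1$.

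Next I would represent $\theta_{0}$ by an honest line bundle $\calL_{0}$ on $J_{\overline{K}}$ (possible because $\operatorname{Pic}(J_{\overline{K}}) \to \operatorname{NS}(J_{\overline{K}})$ is surjective), so that $\calL_{0}^{\otimes m}$ and $\Theta_{\overline{K}}$ have the same N\'{e}ron--Severi class. Using that $\phi_{\calL}$ depends only on the class of $\calL$ and is additive in $\calL$, this yields
\[
	\phi = \phi_{\Theta_{\overline{K}}} = \phi_{\calL_{0}^{\otimes m}} = [m] \circ \phi_{\calL_{0}},
\]
where $\phi = \phi_{\Theta_{\overline{K}}}$ is the principal polarization of \eqref{Eqn: Polarization}, an isomorphism, hence an isogeny of degree $1$. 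Taking degrees of isogenies, $\phi_{\calL_{0}}$ is itself an isogeny (being a factor of the isomorphism $\phi$) and
\[
	1 = \deg \phi = \deg[m] \cdot \deg \phi_{\calL_{0}} = m^{2g}\, \deg \phi_{\calL_{0}}.
\]
Since $m \ge 1$ and $\deg \phi_{\calL_{0}}$ is a positive integer, we conclude $m = 1$, so $\theta = \theta_{0}$ freely generates $\operatorname{NS}(J_{\overline{K}})$, which proves the lemma.

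I do not expect a real obstacle here; the one point requiring care is that, in positive characteristic, the final step must be phrased via degrees of isogenies rather than by invoking bijectivity of a morphism. An equivalent route, if one prefers, is the Riemann--Roch formula $\deg \phi_{\calL} = \chi(\calL)^{2}$ on an abelian variety, combined with $\chi(\calL_{0}^{\otimes m}) = m^{g}\chi(\calL_{0})$ and $\chi(\Theta_{\overline{K}}) = h^{0}(X_{\overline{K}} \text{-case}) = 1$, which again forces $m = 1$.
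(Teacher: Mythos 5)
Your proof is correct and follows essentially the same route as the paper: torsion-freeness of $\operatorname{NS}(J_{\overline{K}})$ reduces the claim to showing the multiplier $m$ in $\theta = m\theta_0$ is a unit, which is forced by a numerical computation attached to the principal polarization. The paper phrases that last step via the self-intersection $\theta^{g}/g! = 1$ (Riemann--Roch), while you use $1 = \deg\phi = m^{2g}\deg\phi_{\calL_0}$; these are equivalent via $\deg\phi_{\calL} = \chi(\calL)^{2}$, as you yourself note at the end.
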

\begin{proof}
	If $J_{K}$ has Picard rank $1$, then the N\'{e}ron--Severi group $\operatorname{NS}(J_{\overline{K}})$ is cyclic because it is torsion-free \cite[Corollary~2, page~178]{mumford70}, so we may fix a generator  $c$.  Writting $\theta = n \cdot c$ for some $n \in \bbZ$, we have
	\begin{align*}
		n^g \cdot (c^g/g!) 	= & \theta^{g}/g! \\
						=& 1 \text{ by the Riemann--Roch Formula.}
	\end{align*}
	So $n^g$ divides $1$ and hence $n=\pm 1$.  	
\end{proof}

\subsection{Proof of the Main Theorem} \label{Section: semi-factorial}
Here we prove that $\overline{J}/S$ is a semi-factorial model of the N\'{e}ron model provided the Picard rank of $J_{K}$ is $1$.

\begin{lm} \label{Lemma: FlatEct}
 $\overline{J} \to S$ is flat, and  $\overline{J}$  is Cohen--Macaulay and normal.  
\end{lm}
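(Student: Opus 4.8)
The plan is to deduce all three assertions simultaneously by realizing $\overline{J}$, up to a projective bundle and an étale base change, as a relative Hilbert scheme of points on $X/S$, which is a local complete intersection over $S$ because $X_0$ is planar. The one geometric input is that $X_0$, being an effective Cartier divisor on the regular surface $X$, has embedding dimension at most $2$ at every point; consequently, étale-locally near $X_0$, the scheme $X$ is a relative effective divisor $V(f)$ in a smooth relative surface over $S$. Fix $d\gg0$ and let $H=\operatorname{Hilb}^{d}(X/S)$ be the (projective) Hilbert scheme of length-$d$ subschemes of the fibers of $X/S$.

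First I would check that $H\to S$ is flat with $H$ Cohen--Macaulay and normal. Étale-locally on $X$, the scheme $H$ is the zero locus, inside the Hilbert scheme of $d$ points of a smooth relative surface over $S$ --- which is smooth over $S$ of relative dimension $2d$, hence regular --- of the section of the locally free rank-$d$ sheaf $\operatorname{pr}_{*}\calO_{\calZ}$ ($\calZ$ the universal subscheme) cut out by $f$. By Krull's height theorem each component of $H$ has dimension at least $2d+1-d=d+1$; on the other hand $\operatorname{Hilb}^{d}(X_{K})=\operatorname{Sym}^{d}(X_{K})$ has dimension $d$ and, because $X_0$ has planar singularities, $\operatorname{Hilb}^{d}(X_{0})$ is integral of dimension $d$ (Altman--Iarrobino--Kleiman). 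So no component of $H$ lies over $0$, the horizontal components have dimension exactly $d+1$, the section cut out by $f$ is regular, and $H$ is a local complete intersection over $S$; in particular $H$ is Cohen--Macaulay. Having no embedded points and no component over $0$, it is flat over $S$. Finally, the codimension-one points of $H$ lie either over the generic point of $S$, where $H=\operatorname{Sym}^{d}(X_{K})$ is smooth, or at the generic point of the integral fiber $\operatorname{Hilb}^{d}(X_{0})$, where a uniformizer of $R$ generates the maximal ideal so $H$ is a discrete valuation ring; thus $H$ is $R_{1}$, and with $S_{2}$ from Cohen--Macaulayness it is normal.

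To transfer these properties to $\overline{J}$, fix an $S$-ample line bundle $\calO_{X}(1)$, of relative degree $e>0$ on the fibers, and take $d=me$ for $m\gg0$. The twisted Abel map $H\to\overline{J}$ sending a subscheme $Z$ to $\calI_{Z}\otimes\calO_{X}(m)$ is well defined and, for $m$ large, is a projective bundle: Riemann--Roch and Serre vanishing make the relevant pushforward of the universal sheaf locally free of rank $d-g+1$, and $H$ is its projectivization (cf.\ \cite{altman80}). A projective bundle over its base is faithfully flat and smooth, so flatness over $S$, Cohen--Macaulayness, and normality descend from $H$ to $\overline{J}$.

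The main obstacle is Cohen--Macaulayness, which rests entirely on the local complete intersection structure of $H$ and thus on the planarity of $X_0$; the supporting facts --- the local embedding of $X$ in a smooth relative surface, the dimension and irreducibility of $\operatorname{Hilb}^{d}(X_{0})$ for planar $X_0$, and the projective-bundle structure of the twisted Abel map for $d\gg0$ --- require care but are standard.
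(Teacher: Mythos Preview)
Your argument is correct and is essentially an unpacking of what the paper does by citation. The paper invokes \cite[Theorem~(9)]{kleiman77} directly for flatness and Cohen--Macaulay fibers of $\overline{J}\to S$ (reducing, via \cite[Lemma~3.4]{esteves02}, to the case where $X$ sits in a smooth relative $S$-surface), and then verifies R1 on $\overline{J}$ itself by observing that the line bundle locus $J$ is dense (again by \cite{kleiman77}) and contained in the smooth locus. You instead reconstruct the content of that theorem: you pass to $H=\operatorname{Hilb}^{d}(X/S)$, exhibit the local complete intersection structure coming from planarity of $X_0$, deduce flatness, Cohen--Macaulayness and normality for $H$, and then descend to $\overline{J}$ through the Abel map realized as a projective bundle for $d\gg 0$. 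Your R1 check at the generic point of $H_{0}$ is the Hilbert-scheme avatar of the paper's R1 check at the generic point of $\overline{J}_{0}$.

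What each route buys: the paper's proof is shorter and keeps the Abel map out of the picture; yours is more self-contained and makes visible the geometric source of Cohen--Macaulayness (the section of a rank-$d$ bundle on $\operatorname{Hilb}^{d}$ of a smooth relative surface being regular). Two small points worth noting in your write-up: the \'etale-local embedding of $X$ into a smooth relative $S$-surface is exactly the reduction the paper cites from \cite[Lemma~3.4]{esteves02}, and it deserves a reference since regularity of $X$ alone does not give smoothness over $S$; and the Abel map is in general only an \'etale-local projective bundle (a Brauer--Severi scheme) absent a universal sheaf, but this is still smooth and faithfully flat, so your descent of flatness, Cohen--Macaulayness, and normality goes through unchanged.
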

\begin{proof}
	Theorem~(9) of \cite{kleiman77} states that $\overline{J} \to S$ is flat with Cohen--Macaulay fibers. (That theorem includes the hypothesis that $X$ lies on a $S$-smooth family of surfaces, but we can reduce to this case by arguing as in the proof of  \cite[Lemma~3.4]{esteves02}.)   Since $S$ is Cohen--Macaulay, we can conclude that $\overline{J}$ itself is Cohen--Macaulay.

	We prove  $\overline{J}$ is normal using Serre's criteria.  To verify the criteria, we  need to show that Condition~R1 holds. The line bundle locus $J_0 \subset \overline{J}_0$ is dense in the special fiber by \cite[Theorem~(9)]{kleiman77}, so  the line bundle locus $J \subset \overline{J}$ in the total space contains all codimension 1 points.  The  locus $J$ is contained in the smooth locus of $\overline{J}/S$, hence in the regular locus of $\overline{J}$, and so Condition~R1 is satisfied.
\end{proof}

\begin{proof}[Proof of the Main Theorem]
	By Lemma~\ref{Lemma: FlatEct} we just need to show that $\overline{J}/S$ is semi-factorial, i.e.~
	\begin{equation} \label{Eqn: SemifactorialHomo}
		\operatorname{Pic}(\overline{J}) \to \operatorname{Pic}(J_K) 
	\end{equation}
	is surjective.

	First, assume that $X$ admits a line bundle $\calN$ with fiber-wise degree $g-1$.  Then the set $\{ [\calL] \in \overline{J} \colon h^{0}(X, \calL \otimes \calN ) \ne 0 \} \subset \overline{J}$ is the support of a relatively effective divisor $\Theta$  that extends the classical theta divisor by \cite[Theorem~13]{soucaris} (or \cite[page~184]{esteves97}).  In particular, $\calO_{J_{K}}(\Theta_{K})$ lies in the image of \eqref{Eqn: SemifactorialHomo}.  
	
	That image also contains all line bundles algebraically equivalent to zero.  Indeed, the polarization isomorphism $\phi$ from Equation~\eqref{Eqn: Polarization} is defined over $K$, so if $\calL_{K}$ is a line bundle on $J_K$ that is algebraically equivalent to zero, then we can write $[\calL_{K}] = \phi(a_{K})$ for some $a_{K} \in J_{K}(K)$.  Here $[\calL_K] \in \operatorname{Pic}^{0}(J_{K}/K)(K)$ is the point represented by $\calL_{K}$.  The $S$-scheme $J/S$ satisfies the N\'{e}ron Mapping Property (by e.g.~\cite[Theorem~3.9]{kass13}), so $a_{K} \in J_{K}(K)$ is the restriction of some $a \in J(S)$.  The line bundle locus $J$ acts on $\overline{J}$ (by tensor product), so translation $\tau_{a} \colon \overline{J} \to \overline{J}$ by $a$ is well-defined, and the line bundle $\calL:= \calO_{\overline{J}}( \tau_{a}^{*}(\Theta) - \Theta)$  extends $\calL_{K}$.  
	
	We have now shown that the image of \eqref{Eqn: SemifactorialHomo} contains both $\calO_{J_{K}}(\Theta_{K})$ and the line bundles algebraically equivalent to zero.  Together these line bundles generate $\operatorname{Pic}(J_{K})$ by Lemma~\ref{Lemma: ThetaGenerates}, so \eqref{Eqn: SemifactorialHomo} is surjective, proving the theorem in the special case that a $\calN$ exists.

In the general case,  we argue as allows.  Given a line bundle $\calL_K$ on $J_K$, we can extend $\calL_K$ to a family $\calL$ of rank $1$, torsion-free sheaves on $\overline{J}$ (by e.g.~the $S$-projectivity of the relevant compactified Picard scheme).  There exists a line bundle $\calN$ with fiber-wise degree $g-1$ on $X_T$ for some \'{e}tale cover $T \to S$ with $T$ the spectrum of a dvr because $X_0$ is geometrically reduced.  Say $L$ is the field of fractions of the dvr $\Gamma(T, \calO_{T})$.  The base-change $X_T$ remains regular, so $\calL_L$ extends to a line bundle on $\overline{J}_T$.  This extension must equal $\calL_T$ (by e.g.~the $S$-separatedness of the relevant compactified Picard scheme), so $\calL_{T}$ and hence $\calL$ must be a line bundle. 
\end{proof}

\begin{rmk} \label{Remark: NonFactorial}
	Does $\overline{J}$ satisfy stronger conditions than semi-factoriality?  Typically $\overline{J}$ does not satisfy the condition of regularity.  Let $K=\bbQ$, $R=\bbZ_{(3)}$ (the localization of $\bbZ$ at $3$),  $S = \Spec(R)$, and $X/S$ the minimal proper regular model of the affine curve $\Spec(R[x,y]/(y^2-x^2 (x-1)^2 (x^2+1)-3)$.  The family $X/S$ is a family of genus 2 curves with special fiber $X_0$ a rational curve with 2 nodes.  Consider the family of compactified jacobians $\overline{J}/S$ associated to $X/S$.

	   If $\nu \colon \bbP^1 \cong \widetilde{X}_0 \to X_0$ is the normalization, then $\overline{J}$ has a singularity at the rank $1$, torsion-free sheaf $I := \nu_{*} \calO(-2)$.  The singularity of $\overline{J}$ at $I$ is computed in  \cite{kass09}.  The sheaf $I$ fails to be locally free at 2 nodes, so by \cite[Lemma~6.2]{kass09} the completed local ring  is isomorphic to
	\begin{displaymath}
		\widehat{O}_{\overline{J}, [I]} = \widehat{R}[[a_1, b_1, a_2, b_2]]/(a_1 b_1 - 3, a_2 b_2 - 3)
	\end{displaymath}
	This ring not only fails to be regular, but it also fails to be factorial. (The height 1 prime $(a_1, a_2)$ is nonprincipal because the images of $a_1$, $a_2$ in the quotient $(3, a_i, b_i)/(3, a_i, b_i)^{2}$ are linearly independent.)
	
	However,  $\overline{J}/S$ is semi-factorial.  Indeed, by the Main Theorem, we just need to show that $J_{K}=J_{\bbQ}$ has Picard rank $1$, and we do so as follows. The N\'{e}ron--Severi group $\operatorname{NS}(J_{\overline{\bbQ}})$ injects  into the endomorphism ring $\operatorname{End}(J_{\overline{\bbQ}})$, and we compute this endomorphism ring by relating it to the endomorphism ring of the reduction of $J_{\bbQ}$ at a prime of good reduction.

	Both the curve $X_{\bbQ}$ and its jacobian $J_{\bbQ}$ have good reduction at the primes $p=5, 13$, as can be seen by reducing the equation $y^2=x^2(x-1)^2 (x^2+1)+3$ mod $p$.  Using this equation to naively count $\bbF_{p^n}$-points, we compute that the characteristic polynomial $f_p$ of the Frobenius endomorphism of $J_{\bbF_{p}}$ is 
	\begin{gather*}
		f_5 = x^4 - 2 x^3 + 3 x^2 - 10 x + 25, \\
		f_{13} = x^4 + 7 x^3 + 35 x^2 + 91 x + 169.
	\end{gather*}
	Applying   \cite[Theorem~6]{howe02} to these polynomials, we get that the reduction $J_{\bbF_{p}}$ is absolutely simple for $p=5, 13$, so  $\bbQ \otimes \operatorname{End}(J_{\overline{\bbF}_{p}}) = \bbQ[x]/(f_p)$.

	The reduction map injects $\bbQ \otimes \operatorname{End}(J_{\overline{\bbQ}})$ into $\bbQ \otimes \operatorname{End}(J_{\overline{\bbF}_{p}})$ for $p=5, 13$.  A computation shows that the discriminant of $\bbQ[x]/(f_5)$ is coprime to the discriminant of $\bbQ[x]/(f_{13})$, and $\bbQ$ has no nontrivial unramified extensions, so the only field contained in both $\bbQ[x]/(f_5)$ and $\bbQ[x]/(f_{13})$ is $\bbQ$.  In particular, $\operatorname{End}(J_{\overline{\bbQ}})=\bbZ$. 	This example was suggested to the author by Bjorn Poonen.

\end{rmk}

\begin{rmk} \label{Remark: Extending}
	Corollaire~6.4 of  \cite{pepin13} proves that a semi-factorial model $\widetilde{J}/S$ of $J_{K}$ can be chosen to be well-behaved with respect to certain dvr extensions.  To be precise, given morphisms $T_1 \to S$, \dots, $T_n \to S$ corresponding to extensions of $R$ contained  in the strict henselization $R^{\text{hs}}$, a semi-factorial model $\widetilde{J}/S$ can be chosen so that $\widetilde{J}_{T}/T$ is a semi-factorial model when $T \to S$ equals either some $T_i \to S$ or a morphism corresponding to a ``\emph{permise}" dvr extension.
	
	The family $\overline{J}/S$ of compactified jacobians satisfies this condition.  In fact, it satisfies a stronger condition.  By definition the formation of the family of compactified jacobians commutes with arbitrary base change, so if $T \to S$ is a morphism that corresponds to a dvr extension, then $\overline{J}_{T}/T$ is a semi-factorial model of the N\'{e}ron model provided $X_T$ is regular.  The scheme $X_T$ is regular when $T \to S$ is one of the morphisms considered by P\'{e}pin or more generally when $T \to S$ is regular and surjective (see \cite[Remarque~5.5]{pepin13}).
\end{rmk}

\subsection*{Acknowledgements} The author thanks the anonymous referee for helpful feedback and, in particular, for suggesting the proof of the Main Theorem, which simplifies an earlier argument of the author.  The author thanks Bjorn Poonen for suggesting the example in Remark~\ref{Remark: NonFactorial}; Davesh Maulik and Damiano Testa for  discussions about jacobians of N\'{e}ron--Severi rank $1$; Michael Filaseta for a discussion about field theory; David Harvey for verifying the computations in Remark~\ref{Remark: NonFactorial} using  the {M}agma algebra system;  Dino Lorenzini for feedback on an earlier draft of this article; Ethan Cotterill for help with the French language.

This work was completed while the author was a Wissenschaftlicher Mitarbeiter at the Institut f\"{u}r Algebraische Geometrie, Leibniz Universit\"{a}t Hannover. During that time, the author was supported by an AMS-Simons Travel Grant.

} 

\bibliographystyle{AJPD}



\providecommand{\bysame}{\leavevmode\hbox to3em{\hrulefill}\thinspace}
\providecommand{\MR}{\relax\ifhmode\unskip\space\fi MR }
\providecommand{\MRhref}[2]{%
  \href{http://www.ams.org/mathscinet-getitem?mr=#1}{#2}
}
\providecommand{\href}[2]{#2}

\end{document}